\def\diag{\mathop{\rm diag}\nolimits}
\def\build#1#2#3{\mathrel{\mathop{#1}\limits^{#2}_{#3}}}
\def\Vec{\mathop{\rm vec}\nolimits}
\def\Cov{\mathop{\rm Cov}\nolimits}
\def\E{\mathop{\rm E}\nolimits}
\renewenvironment{abstract}
                 {\vspace{6pt}
                  \begin{center}
                  \begin{minipage}{5.4in}
                  \rule{5.4in}{.1mm}\\[2ex]
                  \small
                  \textbf{Abstract}\\[2ex]
                  \hspace{.5cm}}
                 {\end{minipage}\end{center}}
\newcommand{\keywords}[1]
           {\begin{center}
            \small
            \begin{minipage}{5.4in}
            \noindent \textit{Key words:}~{\textrm{#1}}
            \\ \rule{5.4in}{.1mm}\\[2ex]
            \end{minipage}
            \end{center}
            \normalsize
           }
\newcommand{\AMS}[1]
           {\begin{center}
            \small
            \begin{minipage}{5.4in}
            \noindent \textit{2000 Mathematical Subject Classification:}~{primary \textrm{#1}.}
            \end{minipage}
            \end{center}
            \normalsize
           }
\newtheorem{theorem}{Theorem}[section]
\newtheorem{corollary}[theorem]{Corollary}
\title{\textbf{Asymptotic normality of the optimal solution in multiresponse surface methodology}}
\author{\textbf{Jos\'e A. D\'{\i}az-Garc\'{\i}a},\\
  \textit{\small Department of Statistics and Computation,}\\
  \textit{\small  Universidad Aut\'onoma Agraria Antonio Narro},\\
  \textit{\small 25350 Buenavista, Saltillo, Coahuila, Mexico}.\\
  \textit{\small E-mail: jadiaz@uaaan.mx}\\
  \textbf{Francisco J. Caro-Lopera},\\
  \textit{\small Departmento de Ciencias B\'{a}sicas},\\
  \textit{\small Universidad de Medell\'{\i}n},\\
  \textit{\small Carrera 87 No. 30 - 65, Medell\'{\i}n, Colombia},\\
  \textit{\small 36240 Guanajuato, M\'exico}.\\
  \textit{\small E-mail: fjcaro@udem.edu.co}\\
  }
\date{}
\begin{document}
\maketitle
\begin{abstract}
In this work is obtained an explicit form for the perturbation effect on the matrix of
regression coefficients on the optimal solution in multiresponse surface methodology. Then, the
sensitivity analysis of the optimal solution is studied and the critical point characterisation
of the convex program, associated with the optimum of a multiresponse surface, is also
analysed. Finally, the asymptotic normality of the optimal solution is derived by standard
methods.
\end{abstract}
\AMS{62K20, 90C25, 90C31}
\keywords{Asymptotic normality; multiresponse surface optimisation;
sensitivity analysis; mathematical programming.}

\section{\normalsize Introduction}\label{Sec1}

The multiresponse surface methodology has been considered as  a very useful tool in the study of
designs, phenomena and experiments. Which enables us to propose a set of analytical
relationship between responses and controlled variables through a process of continuous
improvement and optimisation.

It is assumed that a researcher knows a system and a corresponding  set of observable responses
variables $Y_{1}, \cdots, Y_{r}$ which depends on some input variables, $x_{1}, \dots x_{n}$.
This also supposes that that the input  variables $x_{i}^{'s}$ can be controlled by the
researcher with a minimum error.

Typically we have that
\begin{equation}\label{rrs}
    Y_{k}(\mathbf{x}) = \eta_{k}(x_{1}, \dots x_{n}), \quad k = 1, \dots,r, \mbox{ and }
    \mathbf{x} =(x_{1}, \dots x_{n})',
\end{equation}
where the form of the functions $\eta_{k}(\cdot)$'s are unknown and perhaps, very complex, and it is
usually termed as the true response surface. The success of the response surfaces methodology
depends on the approximation of $\eta_{k}(\cdot)$ for a polynomial of low degree in some
region.

For purposes of this paper is assumed that $\eta_{k}(\cdot)$ can be soundly approximated by a
polynomial of second order, that is
\begin{equation}\label{ersp}
   Y_{k}(\mathbf{x}) = \beta_{0k} + \displaystyle\sum_{i=1}^{n}\beta_{ik}x_{i} + \sum_{i=1}^{n}
    \beta_{iik}x_{i}^{2} +  \sum_{i=1}^{n}\sum_{j>i}^{n}\beta_{ijk}x_{i}x_{j}
\end{equation}
where the unknown parameters $\beta_{j}^{'s}$ can be estimated via regression's techniques, as
it will be described in next section.

Next, we are interested in obtaining the levels of the input variables $x_{i}^{'s}$ such that
the response variables $Y_{1}, \cdots, Y_{r}$ are simultaneously minimal (optimal). This can be
achieved if the following multiobjetive mathematical program is solved
\begin{equation}\label{opp}
  \begin{array}{c}
    \build{\min}{}{\mathbf{x}}
    \left(%
    \begin{array}{c}
      Y_{1} (\mathbf{x}) \\
      Y_{2} (\mathbf{x}) \\
      \vdots\\
      Y_{r} (\mathbf{x}) \\
    \end{array}%
    \right)\\
    \mbox{subject to}\\
    \mathbf{x} \in \mathfrak{X},
  \end{array}
\end{equation}
where $\mathfrak{X}$ is certain operating region for the input variables $x_{i}^{'s}$.

Now, two questions, closely related, can be observed:
\begin{enumerate}
  \item When the estimations of (\ref{ersp}) for $k = 1, \dots, r$ are considered into (\ref{opp}), the critical
  point $\mathbf{x}^{*}$ obtained as solution shall be a function of the estimators
  $\widehat{\beta}_{j}^{\ 's}$ of the $\beta_{j}^{'s}$. Thus, given that $\widehat{\beta}_{j}^{\ 's}$ are random variables,
  then $\mathbf{x}^{*}\equiv \mathbf{x}^{*}(\widehat{\beta}_{j}^{\ 's})$ is a random vector
  too. So, under the assumption that the distribution of $\widehat{\boldsymbol{\beta}}$ is known, then, what is the
  distribution of $\mathbf{x}^{*}(\widehat{\beta}_{j}^{\ 's})$?
  \item And, perhaps it is not sufficient to know only a point estimate of $\mathbf{x}^{*}(\widehat{\beta}_{j}^{\ 's})$,
 could be more convenient to know a estimated region or a estimated interval.
\end{enumerate}
In particular, the distribution of the critical point in a univariate response surface model
was studied by \citet{dr:01,dre:02}, when $y(\mathbf{x})$ is defined as an hyperplane.

Now, in the context of the mathematical programming problems, the sensitivity analysis studies
the effect of small perturbations in: (1) the parameters on the optimal objective function
value and (2) the critical point. In general, these parameters shape the objective function and
constraint the approach to the mathematical programming problem. In particular, \citet{j:77},
\citet{d;84} and \citet{fg:82} have studied the sensitivity analysis of the mathematical
programming, among many other authors. As an immediate consequence of the sensitivity analysis
emerges the asymptotic normality study of the critical point, which can be performed by
standard methods of mathematical statistics (see similar results for the case of maximum
likelihood estimates \citet{as:58}). This last consequence makes the sensitivity analysis very
appealing for researching from a statistical point of view. However, this approach must be
fitted into the classical philosophy of the sensitivity analysis; i.e., we need to translate
into the statistical language, the general sensitivity analysis methodology, which deals with a
number of ways in which the estimators of certain model are affected by omission of a
particular set of variables or by the inclusion or omission of a particular observation or set
of observations, see \citet{chh:88}.

This papers pursues to important aims:  the effect of perturbations of the matrix of regression
parameters on the optimal solution of the multiresponse surface model and  the asymptotic
normality of the critical point. First, in Section \ref{sec2} some notation is defined. Then,
the multiresponse surface mathematical program is proposed in Section \ref{sec3} as a
multiobjective mathematical programming problem and a general solution is considered in terms
of a functional. The characterisation of the critical point is given in Section \ref{sec4} by
stating the first-order and second-order Kuhn-Tucker conditions. Finally, the asymptotic
normality of a critical point is established in Section \ref{sec5} and for a particular form of
the functional, the asymptotic normality of a critical point is also derived.

\section{\normalsize Notation}\label{sec2}

For convenience,  the principal properties and usual notations are given here. A detailed
discussion of the multiresponse surface methodology can be found in \citet[Chap. 7]{kc:87} and
\citet{kc:81}.

Let $N$ be the number of experimental runs and $r$ be the number of response variables, which
can be measured for each setting of a group  of $n$ coded variables $x_{1},  x_{2}, \dots,
x_{n}$. It is assumed that the response variables can be modelled by a second order polynomial
regression model in terms of $x_{i}$, $i = 1,\dots,n$. Hence,  the $k^{th}$ response model can
be written as
\begin{equation}\label{lmu}
    \mathbf{Y}_{k} = \mathbf{X}_{k}\boldsymbol{\beta}_{k} + \boldsymbol{\varepsilon}_{k}, \quad
    k = 1, \dots, r,
\end{equation}
where $\mathbf{Y}_{k}$ is an $N \times 1$ vector of observations on the $k^{th}$ response,
$\mathbf{X}_{k}$ is an $N \times p$ matrix of rank $p$ termed the design or regression matrix,
$p = 1 + n + n(n+1)/2$,  $\boldsymbol{\beta}_{k}$ is a $p \times 1$ vector of unknown constant
parameters,  and $\boldsymbol{\varepsilon}_{k}$ is a random error vector associated with the
$k^{th}$ response. For purposes of this study is assumed that $\mathbf{X}_{1} = \cdots =
\mathbf{X}_{r} = \mathbf{X}$. Therefore, (\ref{lmu}) can be written as
\begin{equation}\label{lmm}
    \mathbf{Y} = \mathbf{X}\mathbb{B} + \mathbb{E}
\end{equation}
where $\mathbf{Y} = \left[\mathbf{Y}_{1}\vdots \mathbf{Y}_{2} \vdots \cdots \vdots
\mathbf{Y}_{r}\right]$,  $\mathbb{B} = \left[\boldsymbol{\beta}_{1}\vdots
\boldsymbol{\beta}_{2} \vdots \cdots \vdots \boldsymbol{\beta}_{r}\right]$, moreover
$$
  \boldsymbol{\beta}_{k} = (\beta_{0k},\beta_{1k},\dots,\beta_{nk},\beta_{11k},\dots,\beta_{nnk},\beta_{12k},
  \dots,\beta_{(n-1)nk})'
$$
and $\mathbb{E} = \left[\boldsymbol{\varepsilon}_{1}\vdots \boldsymbol{\varepsilon}_{2} \vdots
\cdots \vdots \boldsymbol{\varepsilon}_{r}\right]$,  such that $\mathbb{E} \sim \mathcal{N}_{N
\times r} (\mathbf{0},  \mathbf{I}_{N} \otimes \mathbf{\Sigma})$ i.e. $\mathbb{E}$ has an $N
\times r$ matrix multivariate normal distribution with $\E (\mathbb{E}) = \mathbf{0}$ and
$\Cov(\Vec \mathbb{E}') = \mathbf{I}_{N} \otimes \mathbf{\Sigma}$, where $\mathbf{\Sigma}$ is
a $r \times r$ positive definite matrix. Now, if  $\mathbf{A} = \left[\mathbf{A}_{1}\vdots
\mathbf{A}_{2}\vdots \cdots \vdots \mathbf{A}_{r}\right]$, with $\mathbf{A}_{j}$, $j =
1,\cdots, r$  the columns of $\mathbf{A}$; then $\Vec \mathbf{A} = (\mathbf{A}'_{1},
\mathbf{A}'_{2}, \dots, \mathbf{A}'_{r})'$ and $\otimes$ denotes the direct (or Kronecker)
product of matrices, see \citet[Theorem 3.2.2,  p. 79]{mh:82}. In addition denote
\begin{description}
\item[-] $\mathbf{x} = \left(x_{1},  x_{2}, \dots, x_{n}\right)'$: The vector of controllable
variables or factors. Formally,  an $x_{i}$ variable is associated with each factor $A,  B,
...$

\item[-] $\widehat{\mathbb{B}} = \left[\widehat{\boldsymbol{\beta}}_{1}\vdots
\widehat{\boldsymbol{\beta}}_{2} \vdots \cdots \vdots \widehat{\boldsymbol{\beta}}_{r}\right]$:
The least squares estimator of $\mathbb{B}$ given by $\widehat{\mathbb{B}} =
(\mathbf{X}'\mathbf{X})^{-1}\mathbf{X}'\mathbf{Y}$,  from where
$$
  \widehat{\boldsymbol{\beta}}_{k} = (\mathbf{X}' \mathbf{X})^{-1} \mathbf{X}' \mathbf{Y}_{k} =
  (\widehat{\beta}_{0k},\widehat{\beta}_{1k},\dots,\widehat{\beta}_{n},\widehat{\beta}_{11k},
  \dots,\widehat{\beta}_{nnk},\widehat{\beta}_{12k},\dots,\widehat{\beta}_{(n-1)nk})'
$$
$k = 1,  2,  \dots,  r$. Moreover,  under the assumption that $\mathbb{E} \sim
\mathcal{N}_{N \times r} (\mathbf{0},  \mathbf{I}_{N} \otimes \mathbf{\Sigma})$,  then
$\widehat{\mathbb{B}} \sim \mathcal{N}_{p \times r} (\mathbb{B},  (\mathbf{X}'\mathbf{X})^{-1}
\otimes \mathbf{\Sigma})$,  with $\Cov(\Vec \widehat{\mathbb{B}}') =
(\mathbf{X}'\mathbf{X})^{-1} \otimes \mathbf{\Sigma}$.

\item[-] $\mathbf{z}(\mathbf{x}) = (1, x _{1},  x_{2},  \dots,  x_{n},  x _{1}^{2},  x_{2}^{2},
\dots,  x_{n}^{2},  x _{1} x_{2},  x_{1}x_{3}\dots,  x_{n-1}x_{n})'.$

\item[-] $\widehat{\boldsymbol{\beta}}_{1k}=
(\widehat{\beta}_{1k},\dots,\widehat{\beta}_{nk})'$ and
$$
  \widehat{\mathbf{B}}_{k} = \frac{1}{2}
  \left (
     \begin{array}{cccc}
       2\widehat{\beta}_{11k} & \widehat{\beta}_{12k} & \cdots & \widehat{\beta}_{1nk} \\
       \widehat{\beta}_{21k} & 2\widehat{\beta}_{22k} & \cdots & \widehat{\beta}_{2nk} \\
       \vdots & \vdots & \ddots & \vdots \\
       \widehat{\beta}_{n1k} & \widehat{\beta}_{n2k} & \cdots & 2\widehat{\beta}_{nnk}
     \end{array}
  \right)
$$

\item[$\centerdot$] \hspace{-3cm} \vspace{-.85cm}
\begin{eqnarray*}
  \widehat{Y}_{k} (\mathbf{x}) &=& \mathbf{z}'(\mathbf{x})\widehat{\boldsymbol{\beta}}_{k} \\
&=& \widehat{\beta}_{0k} + \displaystyle\sum_{i=1}^{n}\widehat{\beta}_{ik}x_{i} +
\sum_{i=1}^{n}
    \widehat{\beta}_{iik}x_{i}^{2} +
    \sum_{i=1}^{n}\sum_{j>i}^{n}\widehat{\beta}_{ijk}x_{i}x_{j}\\
    &=& \widehat{\beta}_{0k}+ \widehat{\boldsymbol{\beta}}'_{1k}\mathbf{x}+\mathbf{x}^{'}\widehat{\mathbf{B}}_{k}\mathbf{x}:
\end{eqnarray*}
The response surface or predictor equation at the point $\mathbf{x}$ for the k$^{th}$ response
variable.

\item[-] $\widehat{\mathbf{Y}} (\mathbf{x}) = \left(\widehat{Y}_{1} (\mathbf{x}),  \widehat{Y}_{2}
(\mathbf{x}),  \dots,  \widehat{Y}_{r} (\mathbf{x})\right)' = \widehat{\mathbb{B}}'
\mathbf{z}(\mathbf{x})$: The multiresponse surface or predicted response vector at the point
$\mathbf{x}$.

\item[-]$\widehat{\mathbf{\Sigma}} = \displaystyle \frac{\mathbf{Y}'(\mathbf{I}_{N} -
\mathbf{X}(\mathbf{X}'\mathbf{X})^{-1}\mathbf{X}')\mathbf{Y}}{N-p}$: The estimator of the
variance-covariance matrix $\mathbf{\Sigma}$ such that $(N-p)\widehat{\mathbf{\Sigma}}$ has a
Wishart distribution with $(N-p)$ degrees of freedom and the parameter $\mathbf{\Sigma}$; this
fact is denoted as $(N-p)\widehat{\mathbf{\Sigma}} \sim \mathcal{W}_{r}(N-p, \mathbf{\Sigma})$.
Here,  $\mathbf{I}_{m}$ denotes an identity matrix of order $m$.

\item[-] Finally,  note that
\begin{equation}\label{eq5}
    E(\widehat{\mathbf{Y}}(\mathbf{x})) =
    E(\widehat{\mathbb{B}}'\mathbf{z}(\mathbf{x})) = \mathbb{B}'\mathbf{z}(\mathbf{x})
\end{equation}
and
\begin{equation}\label{eq6}
    \Cov (\widehat{\mathbf{Y}}(\mathbf{x})) = \mathbf{z}'(\mathbf{x})(\mathbf{X}'
    \mathbf{X})^{-1}\mathbf{z}(\mathbf{x}) \mathbf{\Sigma}.
\end{equation}
An unbiased estimator of $\Cov (\widehat{\mathbf{Y}}(\mathbf{x}))$ is given by
\begin{equation}\label{eq7}
   \widehat{ \Cov} (\widehat{\mathbf{Y}}(\mathbf{x})) = \mathbf{z}'(\mathbf{x})(\mathbf{X}'
    \mathbf{X})^{-1}\mathbf{z}(\mathbf{x}) \widehat{\mathbf{\Sigma}}.
\end{equation}
\end{description}

\section{\normalsize Multiresponse surface mathematical programming}\label{sec3}

In the following sections,  we make use of the multiresponse mathematical programming and
multiobjective mathematical programming. For convenience,  the concepts and notations required
are listed below in terms of the estimated model of multiresponse surface mathematical
programming. Definitions and detailed properties can be found in \citet{kc:81},  \citet{kc:87},
\citet{rio:89}, \citet{s86}, and \citet{m99}.

The multiresponse mathematical programming or multiresponse optimisation (MRO) problem is proposed, in
general, as follows
\begin{equation}\label{equiv1}
  \begin{array}{c}
    \build{\min}{}{\mathbf{x}}\widehat{\mathbf{Y}}(\mathbf{x}) = \
    \build{\min}{}{\mathbf{x}}
    \left(%
    \begin{array}{c}
      \widehat{Y}_{1} (\mathbf{x}) \\
      \widehat{Y}_{2} (\mathbf{x}) \\
      \vdots\\
      \widehat{Y}_{r} (\mathbf{x}) \\
    \end{array}%
    \right)\\
    \mbox{subject to}\\
    \mathbf{x} \in \mathfrak{X}.
  \end{array}
\end{equation}
It is a nonlinear multiobjective mathematical  programming problem,  see \citet{s86},
\citet{rio:89} and \citet{m99}; and  $\mathfrak{X}$ denotes the experimental region, usually
taken as a hypercube
$$
  \mathfrak{X}= \{\mathbf{x}|l_{i} < x_{i} < u_{i},  \quad i = 1,  2,  \dots,  n\},
$$
where $\mathbf{l} = \left(l_{1},  l_{2},  \dots,  l_{n}\right)'$,  defines the vector of lower
bounds of factors and $\mathbf{u} = \left(u_{1},  u_{2},  \dots,  u_{n}\right)'$,  gives the
vector of upper bounds of factors. Alternatively,  the experimental region can taken as a
hypersphere
$$
  \mathfrak{X}= \{\mathbf{x}|\mathbf{x}'\mathbf{x} \leq c^{2},   c \in \Re\},
$$
where,   $c$ is set according to the experimental design model under consideration,  see \citet{kc:87}.
Alternatively (\ref{equiv1}) can be written as
$$
  \build{\min}{}{\mathbf{x} \in \mathfrak{X}}\widehat{\mathbf{Y}}(\mathbf{x}).
$$

In the response surface methodology context,  the multiobjective mathematical programs  rarely
contains a point $\mathbf{x^{*}}$ which can be considered as an optimum, i.e. few cases satisfy
the requirement that $ \widehat{Y}_{k} (\mathbf{x}) $ is minimum for all $k = 1, 2, \dots,  r$.
From the viewpoint of multiobjective mathematical programming,  this justifies the following
notion of the \emph{Pareto point}:
\begin{center}
\begin{minipage}[t]{4.5in}
\begin{it}
We say that $\widehat{\mathbf{Y}}^{*} (\mathbf{x})$ is a  \textit{Pareto point} of
$\widehat{\mathbf{Y}} (\mathbf{x})$,  if there is no other point $\widehat{\mathbf{Y}}^{1}
(\mathbf{x})$ such that $\widehat{\mathbf{Y}}^{1} (\mathbf{x}) \leq \widehat{\mathbf{Y}}^{*}
(\mathbf{x})$,  i.e. for all $k$,  $\widehat{Y}_{k}^{1} (\mathbf{x}) \leq \widehat{Y}_{k}^{*}
(\mathbf{x})$ and $\widehat{\mathbf{Y}}^{1} (\mathbf{x}) \neq \widehat{\mathbf{Y}}^{*}
(\mathbf{x})$.
\end{it}
\end{minipage}
\end{center}
\citet{s86},  \citet{rio:89} and \citet{m99} established the existence criteria for Pareto points
in a multiobjective mathematical programming problem  and
the extension of scalar mathematical programming (\textit{Kuhn-Tucker's conditions}) to the
vectorial case.

Methods for solving a multiobjective mathematical program are based on the existing information
about a particular problem. There are three possible scenarios: when the investigator
possesses either complete,  partial or null information,  see \citet{rio:89},  \citet{m99} and
\citet{s86}. In a response surface methodology context,  complete information means that the
investigator understands the population in such a way that it is possible to propose a
\textit{value function} reflecting the importance of each response variable. In partial
information,  the investigator knows the main response variable of the study very well and this
is sufficient support for the research. Finally,  under null information,  the researcher only
possesses information about the estimators of the response surface parameter, and with this
elements an appropriate solution can be found too.

In general, an approach for solving a multiobjective mathematical program consist of proposing
an equivalent nonlinear scalar mathematical program, i.e. as a solution of (\ref{equiv1}) is
proposed the following problem
\begin{equation}\label{eq1}
  \begin{array}{c}
    \build{\min}{}{\mathbf{x}}f\left(\widehat{\mathbf{Y}}(\mathbf{x})\right)\\
    \mbox{subject to} \\
    \mathbf{x} \in \mathfrak{X},
  \end{array}
\end{equation}
where $f(\cdot)$ defines a functional ($f(\cdot)$ is a function that takes functions as its
argument, i.e. a function whose domain is a set of functions). Moreover, in the context of
multiobjective mathematical programmming, the functional $f(\cdot)$ is such that if
$\mathfrak{M} \subset \Re^{r}$ denotes a set of multiresponse surface functions, then
\begin{center}
\begin{minipage}[t]{4.5in}
\begin{it}
    The \textbf{functional} is a function $f: \mathfrak{M} \rightarrow \Re$ such
    that $\min\widehat{\mathbf{Y}} (\mathbf{x^{*}}) < \min
    \widehat{\mathbf{Y}} (\mathbf{x}_{1}) \Leftrightarrow
    f(\widehat{\mathbf{Y}} (\mathbf{x^{*}})) <
    f(\widehat{\mathbf{Y}} (\mathbf{x}_{1})),  \quad \mathbf{x}^{*}\neq
    \mathbf{x}_{1}$.
\end{it}
\end{minipage}
\end{center}
In order to consider a greater number of potential solutions of (\ref{equiv1}), usually studied
in the multicriteria mathematical programming, the following alternative problem to (\ref{eq1})
can be proposed
\begin{equation}\label{eq2}
  \begin{array}{c}
    \build{\min}{}{\mathbf{x}}f\left(\widehat{\mathbf{Y}}(\mathbf{x})\right)\\
    \mbox{subject to} \\
    \mathbf{x} \in \mathfrak{X} \cap \mathfrak{S},
  \end{array}
\end{equation}
where $\mathfrak{S}$ is a subset generated by additional potential constraints, generally
derived by a particular technique used for establishing the equivalent scalar mathematical
program (\ref{eq1}). In some particular cases of (\ref{eq1}),  a new fixed parameter may
appear, a vector of response weights $\mathbf{w} = \left(w_{1},  w_{2}, \dots, w_{r}\right)'$,
and/or a vector of target values for the response vector $\boldsymbol{\tau} = \left(\tau_{1},
\tau_{2}, \dots, \tau_{r}\right)'$. Particular examples of this equivalent univariate objective
mathematical programming are the use of goal programming, see \citet{kea:08}, and of the
$\epsilon$-constraint model, see \citet{b:75},  among many others. In particular, under the
$\epsilon$-constraint model, (\ref{eq2}) is proposed as
\begin{equation}\label{eq3}
  \begin{array}{c}
    \build{\min}{}{\mathbf{x}}\widehat{Y}_{j}(\mathbf{x})\\
    \mbox{subject to} \\
    \begin{array}{c}
      \widehat{Y}_{1} (\mathbf{x}) \leq \tau_{1}\\
      \vdots\\
      \widehat{Y}_{j-1} (\mathbf{x}) \leq \tau_{j-1} \\
      \widehat{Y}_{j+1} (\mathbf{x}) \leq \tau_{j+1} \\
      \vdots\\
      \widehat{Y}_{r} (\mathbf{x}) \leq \tau_{r} \\
    \end{array}\\
    \mathbf{x} \in \mathfrak{X}.
  \end{array}
\end{equation}

\section{\normalsize Characterisation of the critical point}\label{sec4}

In the rest of the paper we shall develop the theory of the problem (\ref{eq1}); it is easy to
see  that this problem can be extended with minor modifications to the problem (\ref{eq2}).

Let $\mathbf{x}^{*}(\widehat{\mathbb{B}}) \in \Re^{n}$ be the unique optimal solution of
program (\ref{eq1}) with the corresponding Lagrange multiplier
$\lambda^{*}(\widehat{\mathbb{B}}) \in \Re$. The Lagrangian is defined by
\begin{equation}\label{la}
    L(\mathbf{x}, \lambda; \widehat{\mathbb{B}}) = f\left(\widehat{\mathbf{Y}}(\mathbf{x})\right) + \lambda(||\mathbf{x}||^{2} -
  c^{2}).
\end{equation}
Similarly, $\mathbf{x}^{*}(\mathbb{B}) \in \Re^{n}$ denotes the unique optimal solution of
program (\ref{opp}) with the corresponding Lagrange multiplier $\lambda^{*}(\mathbb{B}) \in
\Re$.

Now we establish the local Kuhn-Tucker conditions that guarantee that the Kuhn-Tucker point
$\mathbf{r}^{*}(\widehat{\mathbb{B}}) = \left[\mathbf{x}^{*}(\widehat{\mathbb{B}}),
\lambda^{*}(\widehat{\mathbb{B}})\right]' \in \Re^{n+1}$ is a unique global minimum of convex
program (\ref{eq1}). First recall that for $f:\Re^{n} \rightarrow \Re$,
$\displaystyle\frac{\partial f}{\partial \mathbf{x}} \equiv \nabla_{\mathbf{x}}$ denotes the
gradient of function $f$.

\begin{theorem}\label{teo1}
The necessary and sufficient conditions that a point $\mathbf{x}^{*}(\widehat{\mathbb{B}}) \in
\Re^{n}$ for arbitrary fixed $\widehat{\mathbb{B}} \in \Re^{p}$, be a unique global minimum of
the convex program (\ref{eq1}) is that, $\mathbf{x}^{*}(\widehat{\mathbb{B}})$ and the
corresponding Lagrange multiplier $\lambda^{*}(\widehat{\mathbb{B}}) \in \Re$, fulfill the
Kuhn-Tucker first order conditions
\begin{eqnarray}\label{kt1}
  \nabla_{\mathbf{x}}L(\mathbf{x}, \lambda; \widehat{\mathbb{B}}) =
   \nabla_{\mathbf{x}}f\left(\widehat{\mathbf{Y}}(\mathbf{x})\right) +
   2\lambda(\widehat{\mathbb{B}}) \mathbf{x} &=& \mathbf{0} \\
  \label{kt2}
  \nabla_{\lambda}L(\mathbf{x}, \lambda; \widehat{\mathbb{B}}) = ||\mathbf{x}||^{2} - c^{2} & \leq & 0 \\
  \label{kt3}
  \lambda(\widehat{\mathbb{B}})(||\mathbf{x}||^{2} - c^{2}) &=& 0\\
  \label{kt4}
  \lambda(\widehat{\mathbb{B}}) &\geq& 0
\end{eqnarray}
\end{theorem}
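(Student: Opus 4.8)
The plan is to recognise (\ref{eq1}) as a convex program with a single smooth inequality constraint and to apply the Karush--Kuhn--Tucker machinery, treating sufficiency and necessity separately. Write $g(\mathbf{x}) = f(\widehat{\mathbf{Y}}(\mathbf{x}))$ for the objective, which the hypothesis that (\ref{eq1}) is a convex program forces to be (strictly) convex in $\mathbf{x}$, and $h(\mathbf{x}) = ||\mathbf{x}||^{2} - c^{2}$ for the constraint function. The feasible set $\mathfrak{X} = \{\mathbf{x} : h(\mathbf{x}) \le 0\}$ is a closed ball, hence convex and compact, and $h$ is convex and differentiable with $\nabla_{\mathbf{x}} h = 2\mathbf{x}$, which is exactly the term appearing in (\ref{kt1}). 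The Lagrangian (\ref{la}) is then $L(\mathbf{x}, \lambda; \widehat{\mathbb{B}}) = g(\mathbf{x}) + \lambda\, h(\mathbf{x})$, and conditions (\ref{kt1})--(\ref{kt4}) are precisely the stationarity, primal feasibility, complementary slackness, and dual feasibility conditions.

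For sufficiency I would fix a point $(\mathbf{x}^{*}, \lambda^{*})$ satisfying (\ref{kt1})--(\ref{kt4}) and consider the map $\mathbf{x} \mapsto L(\mathbf{x}, \lambda^{*}; \widehat{\mathbb{B}})$ with the multiplier held fixed. Since $\lambda^{*} \ge 0$ by (\ref{kt4}) and both $g$ and $h$ are convex, this map is convex; the stationarity condition (\ref{kt1}) says its gradient vanishes at $\mathbf{x}^{*}$, so $\mathbf{x}^{*}$ globally minimises $L(\cdot, \lambda^{*}; \widehat{\mathbb{B}})$. Then for any feasible $\mathbf{x}$ one has
\begin{equation*}
  g(\mathbf{x}^{*}) = L(\mathbf{x}^{*}, \lambda^{*}; \widehat{\mathbb{B}}) \le L(\mathbf{x}, \lambda^{*}; \widehat{\mathbb{B}}) = g(\mathbf{x}) + \lambda^{*} h(\mathbf{x}) \le g(\mathbf{x}),
\end{equation*}
where the first equality uses complementary slackness (\ref{kt3}), the middle inequality is global minimality, and the last uses $\lambda^{*} \ge 0$ together with $h(\mathbf{x}) \le 0$. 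Hence $\mathbf{x}^{*}$ is a global minimum.

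For necessity I would first verify a constraint qualification so that multipliers are guaranteed to exist. Slater's condition holds here because the origin is strictly feasible, $h(\mathbf{0}) = -c^{2} < 0$, so the interior of $\mathfrak{X}$ is nonempty. With Slater's condition in force, the standard Kuhn--Tucker necessity theorem for convex programs (proved by separating the descent and feasible directions via a supporting hyperplane, or equivalently by Farkas' lemma) produces a multiplier $\lambda^{*} \ge 0$ for which (\ref{kt1})--(\ref{kt4}) hold at the optimum. Uniqueness then follows from strict convexity of $g$ on the convex set $\mathfrak{X}$: if two distinct feasible points both attained the minimum, strict convexity along the segment joining them would yield a strictly smaller value, a contradiction.

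The main obstacle is the necessity direction, since it is the only part that genuinely requires a constraint qualification rather than convexity alone; the rest of the argument is a direct verification. A secondary point needing care is to make explicit the convexity and strict-convexity hypotheses on $f(\widehat{\mathbf{Y}}(\cdot))$ that are implicit in calling (\ref{eq1}) a convex program, since these are precisely what drive both the sufficiency chain and the uniqueness claim.
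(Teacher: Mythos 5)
Your proof is correct, but there is nothing in the paper to compare it against: the paper states Theorem \ref{teo1} without any proof, treating it as a standard fact of convex programming imported from the cited literature (\citet{s86}, \citet{rio:89}, \citet{m99}, and the sensitivity-analysis framework of \citet{fg:82}). What you have written is precisely the classical argument the paper implicitly invokes, and the details check out: the sufficiency chain $g(\mathbf{x}^{*}) = L(\mathbf{x}^{*},\lambda^{*};\widehat{\mathbb{B}}) \le L(\mathbf{x},\lambda^{*};\widehat{\mathbb{B}}) \le g(\mathbf{x})$ uses exactly (\ref{kt3}), (\ref{kt1}) with convexity of the fixed-multiplier Lagrangian, and (\ref{kt4}) with primal feasibility, in that order; the necessity direction correctly identifies Slater's condition as the needed constraint qualification, which holds for the ball $\{\mathbf{x} : \|\mathbf{x}\|^{2} \le c^{2}\}$ provided $c \ne 0$ (implicit in the paper's design setting); and you rightly do not use strict complementarity (\ref{a21}), which the paper only imposes after the theorem for the later sensitivity results. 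Your closing remark is also the correct diagnosis of the one genuine gap in the statement itself: the KKT conditions (\ref{kt1})--(\ref{kt4}) alone characterise a global minimum but not a \emph{unique} one, so the uniqueness claim needs the strict convexity of $f(\widehat{\mathbf{Y}}(\cdot))$ that the paper leaves buried in the phrase ``convex program'' (and asserts only in the remark following Corollary \ref{cor1}); making that hypothesis explicit, as you do, is what the paper's statement requires to be literally true. The only nitpick is cosmetic: the theorem's ``$\widehat{\mathbb{B}} \in \Re^{p}$'' should be $\Re^{p \times r}$, but that is the paper's typo, not yours.
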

In addition, assume that strict complementarity slackness holds at $\mathbf{x}^{*}(\mathbb{B})$
with respect to $\lambda^{*}(\mathbb{B})$, that is
\begin{equation}\label{a21}
    \lambda^{*}(\mathbb{B}) > 0 \Leftrightarrow ||\mathbf{x}||^{2} - c^{2} = 0.
\end{equation}
Analogously, the Khun-Tucker condition (\ref{kt1}) to (\ref{kt4}) for $\widehat{\mathbb{B}} =
\mathbb{B}$ are stated next.
\begin{corollary}\label{cor1}
The necessary and sufficient conditions that a point $\mathbf{x}^{*}(\mathbb{B}) \in \Re^{n}$
for arbitrary fixed $\mathbb{B} \in \Re^{p}$, be a unique global minimum of the convex program
(\ref{opp}) is that, $\mathbf{x}^{*}(\mathbb{B})$ and the corresponding Lagrange multiplier
$\lambda^{*}(\mathbb{B}) \in \Re$, fulfill the Kuhn-Tucker first order conditions
\begin{eqnarray}\label{ktp1}
  \nabla_{\mathbf{x}}L(\mathbf{x}, \lambda; \mathbb{B}) =
   \nabla_{\mathbf{x}}f\left(\widehat{\mathbf{Y}}(\mathbf{x})\right) +
   2\lambda(\mathbb{B}) \mathbf{x} &=& \mathbf{0} \\
  \label{ktp2}
  \nabla_{\lambda}L(\mathbf{x}, \lambda; \mathbb{B}) = ||\mathbf{x}||^{2} - c^{2} & \leq & 0 \\
  \label{ktp3}
  \lambda(\mathbb{B})(||\mathbf{x}||^{2} - c^{2}) &=& 0\\
  \label{ktp4}
  \lambda(\mathbb{B}) &\geq& 0
\end{eqnarray}
and $\lambda(\mathbb{B}) = 0$ when $||\mathbf{x}||^{2} - c^{2} < 0$ at
$\left[\mathbf{x}^{*}(\mathbb{B}),\lambda^{*}(\mathbb{B})\right]'$.
\end{corollary}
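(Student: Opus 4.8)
The plan is to derive Corollary \ref{cor1} as an immediate specialization of Theorem \ref{teo1}. The crucial observation is that Theorem \ref{teo1} is established for an \emph{arbitrary} fixed coefficient matrix $\widehat{\mathbb{B}} \in \Re^{p}$; the true matrix $\mathbb{B}$ is simply one admissible value of that argument. I would therefore carry out the substitution $\widehat{\mathbb{B}} \mapsto \mathbb{B}$ throughout the statement of the theorem and verify that nothing in the argument depended on $\widehat{\mathbb{B}}$ being the least-squares estimator rather than a generic matrix.

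First I would note that the scalarized program attached to (\ref{opp}) has exactly the same structure as (\ref{eq1}): the objective is the functional $f$ applied to the response surface, and the feasible region $\mathfrak{X}$, taken here as the ball $\|\mathbf{x}\|^{2} \leq c^{2}$, is unchanged. Hence its Lagrangian is $L(\mathbf{x},\lambda;\mathbb{B})$, obtained from (\ref{la}) by replacing $\widehat{\mathbb{B}}$ with $\mathbb{B}$. Since the convexity hypothesis and the necessity-and-sufficiency of the first-order Kuhn--Tucker conditions hold for every fixed coefficient matrix, they hold in particular at $\mathbb{B}$, and the characterisation (\ref{kt1})--(\ref{kt4}) transcribes verbatim into (\ref{ktp1})--(\ref{ktp4}).

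Finally, I would dispose of the supplementary assertion that $\lambda(\mathbb{B}) = 0$ whenever $\|\mathbf{x}\|^{2} - c^{2} < 0$: this is nothing but the complementary slackness relation (\ref{ktp3}), for if the spherical constraint is inactive then the product $\lambda(\mathbb{B})(\|\mathbf{x}\|^{2} - c^{2})$ vanishes only when the multiplier itself is zero, which is precisely the converse direction of the strict complementary slackness condition (\ref{a21}). The only step that warrants a word of care---and it is hardly an obstacle---is checking that the convexity underlying the ``necessary and sufficient'' claim is a structural property of the program, valid for \emph{any} admissible coefficient matrix, and is therefore inherited at the true value $\mathbb{B}$; once this is granted the corollary is immediate.
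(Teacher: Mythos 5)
Your proposal is correct and follows essentially the same route as the paper, which states Corollary \ref{cor1} without separate proof as the ``analogous'' transcription of the Kuhn--Tucker conditions (\ref{kt1})--(\ref{kt4}) of Theorem \ref{teo1} under the substitution $\widehat{\mathbb{B}} = \mathbb{B}$, exactly as you argue. Your disposal of the supplementary assertion via complementary slackness (\ref{ktp3}) --- the product vanishing with the constraint inactive forces $\lambda(\mathbb{B}) = 0$ --- is likewise the intended (and correct) reading.
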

Observe that, due to the strict convexity of the constraint and objective function, the
second-order sufficient condition is evidently fulfilled for the convex program (\ref{eq1}).

The next result states the existence of a once continuously differentiable solution to program
(\ref{eq1}), see \citet{fg:82}.

\begin{theorem}\label{teo2}
Assume that (\ref{a21}) holds  and the second-order sufficient condition is satisfied by the
convex program (\ref{eq1}). Then
\begin{enumerate}
  \item $\mathbf{x}^{*}(\mathbb{B})$ is a unique global minimum of program (\ref{opp})
  and $\lambda^{*}(\mathbb{B})$ is also unique.
  \item For $\widehat{\mathbb{B}} \in V_{\varepsilon}(\mathbb{B})$
  (is an $\varepsilon-$neighborhood or open ball), there exist a unique once continuously
  differentiable vector function
  $$
    \mathbf{r}^{*}(\widehat{\mathbb{B}}) =
    \left[
    \begin{array}{c}
      \mathbf{x}^{*}(\widehat{\mathbb{B}}) \\
      \lambda^{*}(\widehat{\mathbb{B}})
    \end{array}
    \right] \in \Re^{n+1}
  $$
  satisfying the second order sufficient conditions of problem (\ref{opp}), such that $\mathbf{r}^{*}(\mathbb{B}) =
\left[\mathbf{x}^{*}(\mathbb{B}), \lambda^{*}(\mathbb{B})\right]'$ and hence,
$\mathbf{x}^{*}(\widehat{\mathbb{B}})$ is a unique global minimum of problem (\ref{eq1}) with
associated unique Lagrange multiplier $\lambda^{*}(\widehat{\mathbb{B}})$.
  \item For $\widehat{\mathbb{B}} \in V_{\varepsilon}(\mathbb{B})$, the status of the constraint is unchanged and
  $\lambda^{*}(\widehat{\mathbb{B}}) > 0 \Leftrightarrow ||\mathbf{x}||^{2} - c^{2} = 0$ holds.
\end{enumerate}
\end{theorem}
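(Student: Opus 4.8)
The plan is to read Theorem~\ref{teo2} as a direct application of the classical nonlinear-programming sensitivity theorem of \citet{fg:82} to the Kuhn--Tucker system attached to the Lagrangian (\ref{la}), and to treat its three parts in turn.

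For part~1 I would argue purely from convexity. The feasible set $\{\mathbf{x} : ||\mathbf{x}||^{2} \le c^{2}\}$ is compact and convex, and by hypothesis the objective $f(\widehat{\mathbf{Y}}(\mathbf{x}))$ is strictly convex; hence a minimizer $\mathbf{x}^{*}(\mathbb{B})$ exists and is unique. To pin down $\lambda^{*}(\mathbb{B})$, note that when the constraint is active the identity $||\mathbf{x}^{*}(\mathbb{B})||^{2} = c^{2} > 0$ forces $\mathbf{x}^{*}(\mathbb{B}) \ne \mathbf{0}$, so the single active-constraint gradient $2\mathbf{x}^{*}(\mathbb{B})$ is nonzero and linear independence of the active gradients holds; equation (\ref{ktp1}) then determines $\lambda^{*}(\mathbb{B})$ uniquely.

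Part~2 is the core of the argument. Under strict complementarity (\ref{a21}) the active constraint holds as an equality at the solution, so I would collect the stationarity condition (\ref{ktp1}) and the active constraint into the map
\[
H(\mathbf{x},\lambda;\mathbb{B}) =
\left[
\begin{array}{c}
\nabla_{\mathbf{x}} f(\widehat{\mathbf{Y}}(\mathbf{x})) + 2\lambda\,\mathbf{x}\\
||\mathbf{x}||^{2} - c^{2}
\end{array}
\right],
\]
which satisfies $H(\mathbf{r}^{*}(\mathbb{B});\mathbb{B}) = \mathbf{0}$. The Jacobian of $H$ in $(\mathbf{x},\lambda)$ is the bordered Hessian
\[
J =
\left[
\begin{array}{cc}
\nabla_{\mathbf{x}}^{2} L & 2\mathbf{x}\\
2\mathbf{x}' & 0
\end{array}
\right],
\qquad
\nabla_{\mathbf{x}}^{2} L = \nabla_{\mathbf{x}}^{2} f(\widehat{\mathbf{Y}}(\mathbf{x})) + 2\lambda\,\mathbf{I}_{n}.
\]
The main obstacle is to show that $J$ is nonsingular at $\mathbf{r}^{*}(\mathbb{B})$. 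I would obtain this from the standard bordered-Hessian lemma: nonsingularity holds whenever the active-constraint gradient is of full rank (established in part~1) and $\nabla_{\mathbf{x}}^{2} L$ is positive definite on the tangent subspace $\{\mathbf{d} : \mathbf{d}'\mathbf{x}^{*}(\mathbb{B}) = 0\}$, which is exactly the second-order sufficient condition assumed here. Given nonsingularity, the implicit function theorem yields an $\varepsilon$-neighbourhood $V_{\varepsilon}(\mathbb{B})$ and a unique $C^{1}$ map $\mathbf{r}^{*}(\widehat{\mathbb{B}}) = [\mathbf{x}^{*}(\widehat{\mathbb{B}}),\lambda^{*}(\widehat{\mathbb{B}})]'$ solving $H(\cdot\,;\widehat{\mathbb{B}}) = \mathbf{0}$ with $\mathbf{r}^{*}(\mathbb{B})$ as its value at $\mathbb{B}$; that the resulting $\mathbf{x}^{*}(\widehat{\mathbb{B}})$ is the global minimizer of (\ref{eq1}) follows by repeating the convexity argument of part~1 for each fixed $\widehat{\mathbb{B}}$.

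Part~3 I would settle by continuity. Since $\mathbf{r}^{*}$ is $C^{1}$, hence continuous, and $\lambda^{*}(\mathbb{B}) > 0$ under strict complementarity, shrinking $\varepsilon$ if necessary keeps $\lambda^{*}(\widehat{\mathbb{B}}) > 0$ throughout $V_{\varepsilon}(\mathbb{B})$; together with the equation $||\mathbf{x}^{*}(\widehat{\mathbb{B}})||^{2} - c^{2} = 0$ encoded in $H$, this shows the constraint remains active and (\ref{a21}) persists, so the constraint status is unchanged on the neighbourhood. The only genuinely nontrivial point is the nonsingularity of the bordered Hessian $J$; the remaining assertions are routine consequences of strict convexity, the implicit function theorem, and continuity.
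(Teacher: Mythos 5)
Your proposal is correct and takes essentially the same route as the paper: the paper offers no proof of Theorem \ref{teo2}, deferring to \citet{fg:82}, and your reconstruction --- strict convexity for existence and uniqueness, the implicit function theorem applied to the Kuhn--Tucker system with the bordered Jacobian shown nonsingular via full rank of the active gradient together with second-order sufficiency on the tangent subspace, and continuity of $\lambda^{*}$ for persistence of the constraint status --- is precisely the Fiacco--Ghaemi sensitivity argument, whose bordered matrix and implicit-function step reappear verbatim in the paper's own proof of Theorem \ref{teo3}. The only remark worth adding is that your map $H$ presupposes the active branch of (\ref{a21}); the slack case ($\lambda^{*}(\mathbb{B})=0$ with $\|\mathbf{x}^{*}(\mathbb{B})\|^{2}<c^{2}$) is handled trivially by applying the same implicit-function argument to $\nabla_{\mathbf{x}}f=\mathbf{0}$ alone, and the paper likewise treats only the active case.
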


\section{\normalsize Asymptotic normality of the critical point}\label{sec5}

This section considers  the statistical and mathematical programming aspects of the sensitivity
analysis of the optimum of a estimated multiresponse surface model.

\begin{theorem} \label{teo3}Assume:
\begin{enumerate}
   \item For any $\widehat{\mathbb{B}} \in V_{\varepsilon}(\mathbb{B})$,
   the second-order sufficient condition is fulfilled for the convex program (\ref{eq1})
   such that the second order derivatives
   $$
     \frac{\partial^{2} L(\mathbf{x}, \lambda; \widehat{\mathbb{B}})}{\partial \mathbf{x}\partial
     \mathbf{x}'}, \  \frac{\partial^{2} L(\mathbf{x}, \lambda; \widehat{\mathbb{B}})}{\partial \mathbf{x}\partial
     \Vec'\widehat{\mathbb{B}}}, \  \frac{\partial^{2} L(\mathbf{x}, \lambda; \widehat{\mathbb{B}})}{\partial \mathbf{x}\partial
     \lambda}, \  \frac{\partial^{2} L(\mathbf{x}, \lambda; \widehat{\mathbb{B}})}{\partial \lambda \partial
     \mathbf{x}'}, \  \frac{\partial^{2} L(\mathbf{x}, \lambda; \widehat{\mathbb{B}})}{\partial \lambda
     \partial \Vec\widehat{\mathbb{B}}}
   $$
   exist and are continuous in $\left[\mathbf{x}^{*}(\widehat{\mathbb{B}}),
   \lambda^{*}(\widehat{\mathbb{B}})\right]' \in V_{\varepsilon}(\left[\mathbf{x}^{*}(\mathbb{B}),
   \lambda^{*}(\mathbb{B})\right]')$ and
   $$
     \frac{\partial^{2} L(\mathbf{x}, \lambda; \widehat{\mathbb{B}})}{\partial \mathbf{x}\partial
     \mathbf{x}'}
   $$
   is positive definite.
  \item $\widehat{\mathbb{B}}_{\nu}$, the estimator of the true parameter vector
  $\mathbb{B}_{\nu}$, is based on a sample of size $N_{\nu}$ such that
  $$
    \sqrt{N_{\nu}}(\widehat{\mathbb{B}}_{\nu} - \mathbb{B}_{\nu}) \sim \mathcal{N}_{p \times r}
    (\mathbb{B},  \mathbf{\Theta}), \quad
    \frac{1}{N_{\nu}}\boldsymbol{\Theta} =  (\mathbf{X}'\mathbf{X})^{-1} \otimes \mathbf{\Sigma}.
  $$
  \item (\ref{a21}) holds for $\widehat{\mathbb{B}} = \mathbb{B}$. Then
  asymptotically
  $$
    \sqrt{N_{\nu}}\left[\mathbf{x}^{*}(\widehat{\mathbb{B}}) -
     \mathbf{x}^{*}(\mathbb{B})\right] \build{\rightarrow}{d}{} \mathcal{N}_{n}(\mathbf{0}_{n}, \boldsymbol{\Xi}),
  $$
  where the $n \times n$ variance-covariance matrix
  $$
    \boldsymbol{\Xi} = \left(\frac{\partial \mathbf{x}^{*}(\widehat{\mathbb{B}})}{\partial
    \Vec\widehat{\mathbb{B}}}\right)\widehat{\boldsymbol{\Theta}} \left(\frac{\partial \mathbf{x}^{*}
    (\widehat{\mathbb{B}})}{\partial \Vec\widehat{\mathbb{B}}}\right)', \quad
    \frac{1}{N_{\nu}}\widehat{\boldsymbol{\Theta}} =  (\mathbf{X}'\mathbf{X})^{-1} \otimes \widehat{\mathbf{\Sigma}}
  $$
  such that all elements of $\left(\partial \mathbf{x}^{*}(\widehat{\mathbb{B}})/\partial
  \Vec\widehat{\mathbb{B}}\right)$ are continuous on any $\widehat{\mathbb{B}}
  \in V_{\varepsilon}(\mathbb{B})$; furthermore
  $$
    \left(\frac{\partial \mathbf{x}^{*}(\widehat{\mathbb{B}})}{\partial
    \Vec\widehat{\mathbb{B}}}\right) = \left[\mathbf{I} - \mathbf{P}^{-1}\mathbf{Q}(\mathbf{Q}'\mathbf{P}^{-1}
    \mathbf{Q})^{-1}\mathbf{Q}'\right]\mathbf{P}^{-1} \mathbf{G},
  $$
  where
  $$
    \mathbf{P} = \frac{\partial^{2} L(\mathbf{x}, \lambda; \widehat{\mathbb{B}})}{\partial \mathbf{x}\partial \mathbf{x}'}
  $$
  $$
    \mathbf{Q} = \frac{\partial^{2} L(\mathbf{x}, \lambda; \widehat{\mathbb{B}})}{\partial \lambda \partial \mathbf{x}}
  $$

  $$
    \mathbf{G} = \frac{\partial^{2} L(\mathbf{x}, \lambda; \widehat{\mathbb{B}})}{\partial \mathbf{x}\partial \Vec'\widehat{\mathbb{B}}}
  $$
\end{enumerate}
\end{theorem}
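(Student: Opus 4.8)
The plan is to combine the implicit function theorem applied to the Kuhn--Tucker system with the multivariate delta method. The whole argument rests on Theorem \ref{teo2}, which guarantees that on the neighbourhood $V_{\varepsilon}(\mathbb{B})$ the map $\widehat{\mathbb{B}} \mapsto \mathbf{r}^{*}(\widehat{\mathbb{B}}) = [\mathbf{x}^{*}(\widehat{\mathbb{B}}), \lambda^{*}(\widehat{\mathbb{B}})]'$ is single-valued and once continuously differentiable, and that (by its part 3) the status of the constraint is unchanged. Under (\ref{a21}) with $\lambda^{*}(\mathbb{B}) > 0$ this means the spherical constraint stays active throughout $V_{\varepsilon}(\mathbb{B})$, so the optimum is characterised on the whole neighbourhood by the smooth equality system consisting of the stationarity condition (\ref{kt1}) set to zero together with $||\mathbf{x}||^{2} - c^{2} = 0$.

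First I would differentiate this $(n+1)$-equation system implicitly with respect to $\Vec\widehat{\mathbb{B}}$, treating $\mathbf{x}^{*}$ and $\lambda^{*}$ as the implicit functions supplied by Theorem \ref{teo2}. With $\mathbf{P}$, $\mathbf{Q}$, $\mathbf{G}$ the second-order blocks of $L$ as in the statement (so that $\mathbf{Q} = \partial^{2} L/\partial\lambda\partial\mathbf{x} = 2\mathbf{x}$ is the constraint gradient and $\mathbf{P}$ is the positive-definite Hessian of Assumption 1), the chain rule yields the block system
\begin{eqnarray*}
\mathbf{P}\,\frac{\partial\mathbf{x}^{*}}{\partial\Vec'\widehat{\mathbb{B}}} + \mathbf{Q}\,\frac{\partial\lambda^{*}}{\partial\Vec'\widehat{\mathbb{B}}} &=& -\mathbf{G},\\
\mathbf{Q}'\,\frac{\partial\mathbf{x}^{*}}{\partial\Vec'\widehat{\mathbb{B}}} &=& \mathbf{0}.
\end{eqnarray*}
Because $\mathbf{P}$ is positive definite it is invertible, so I would solve the first equation for $\partial\mathbf{x}^{*}/\partial\Vec'\widehat{\mathbb{B}}$ in terms of $\partial\lambda^{*}/\partial\Vec'\widehat{\mathbb{B}}$, substitute into the second, and use that the scalar $\mathbf{Q}'\mathbf{P}^{-1}\mathbf{Q} > 0$ (nonzero since $\mathbf{Q} = 2\mathbf{x} \neq \mathbf{0}$ on the sphere of radius $c>0$) is invertible to eliminate $\partial\lambda^{*}/\partial\Vec'\widehat{\mathbb{B}}$. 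Back-substitution then produces the oblique-projector expression
$$
\frac{\partial\mathbf{x}^{*}(\widehat{\mathbb{B}})}{\partial\Vec\widehat{\mathbb{B}}} = \left[\mathbf{I} - \mathbf{P}^{-1}\mathbf{Q}(\mathbf{Q}'\mathbf{P}^{-1}\mathbf{Q})^{-1}\mathbf{Q}'\right]\mathbf{P}^{-1}\mathbf{G}
$$
claimed in the theorem (up to the overall sign absorbed in the definition of $\mathbf{G}$, which in any case does not affect the quadratic form below). Continuity of this Jacobian on $V_{\varepsilon}(\mathbb{B})$ is inherited from the continuity of the second derivatives in Assumption 1 together with the continuity of $\mathbf{r}^{*}$.

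With the Jacobian in hand, the limit law follows from the delta method. Vectorising the matrix-normal limit of Assumption 2 gives $\sqrt{N_\nu}\,\Vec(\widehat{\mathbb{B}} - \mathbb{B}) \build{\rightarrow}{d}{} \mathcal{N}_{pr}(\mathbf{0}, \boldsymbol{\Theta})$. A first-order Taylor expansion of $\mathbf{x}^{*}(\cdot)$ about $\mathbb{B}$, valid since $\mathbf{x}^{*}$ is once continuously differentiable on $V_{\varepsilon}(\mathbb{B})$, yields
$$
\sqrt{N_\nu}\left[\mathbf{x}^{*}(\widehat{\mathbb{B}}) - \mathbf{x}^{*}(\mathbb{B})\right] = \left(\frac{\partial\mathbf{x}^{*}(\mathbb{B})}{\partial\Vec'\widehat{\mathbb{B}}}\right)\sqrt{N_\nu}\,\Vec(\widehat{\mathbb{B}} - \mathbb{B}) + o_{p}(1),
$$
and the linear image of a Gaussian limit delivers $\mathcal{N}_{n}(\mathbf{0}_{n}, \boldsymbol{\Xi})$ with $\boldsymbol{\Xi} = (\partial\mathbf{x}^{*}/\partial\Vec\widehat{\mathbb{B}})\boldsymbol{\Theta}(\partial\mathbf{x}^{*}/\partial\Vec\widehat{\mathbb{B}})'$. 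Finally, because $(N-p)\widehat{\mathbf{\Sigma}} \sim \mathcal{W}_{r}(N-p, \mathbf{\Sigma})$ makes $\widehat{\mathbf{\Sigma}}$, and hence $\widehat{\boldsymbol{\Theta}}$, consistent, i.e. $\widehat{\boldsymbol{\Theta}} \build{\rightarrow}{p}{} \boldsymbol{\Theta}$, and the Jacobian is continuous at $\mathbb{B}$, Slutsky's theorem lets me replace $\boldsymbol{\Theta}$ and the Jacobian at $\mathbb{B}$ by their estimated counterparts, giving the stated form of $\boldsymbol{\Xi}$.

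I expect the main obstacle to be the implicit-differentiation and block-elimination step: one must first justify, via the persistence of the active constraint in Theorem \ref{teo2}(3), that the inequality-constrained Kuhn--Tucker conditions reduce to a smooth equality system before differentiating, and then carry the Schur-complement elimination through cleanly so that the projector $\mathbf{I} - \mathbf{P}^{-1}\mathbf{Q}(\mathbf{Q}'\mathbf{P}^{-1}\mathbf{Q})^{-1}\mathbf{Q}'$ emerges with the correct orientation. By contrast, the delta-method portion is routine once the Jacobian and its continuity are in place.
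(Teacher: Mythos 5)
Your proposal is correct and takes essentially the same route as the paper: the paper likewise reduces the Kuhn--Tucker conditions to the equality system (\ref{ktpp1})--(\ref{ktpp2}) under strict complementarity, applies the implicit function theorem to get the bordered-Hessian relation (\ref{xx}), extracts the Jacobian via the block-inverse formula for $\bigl(\begin{smallmatrix}\mathbf{P} & \mathbf{Q}\\ \mathbf{Q}' & \mathbf{0}\end{smallmatrix}\bigr)^{-1}$ --- algebraically identical to your Schur-complement elimination, including your check that $\mathbf{Q}'\mathbf{P}^{-1}\mathbf{Q}>0$ since $\mathbf{Q}=2\mathbf{x}\neq\mathbf{0}$ on the active sphere --- and then invokes the delta method and continuity to substitute $\widehat{\boldsymbol{\Theta}}$ and the estimated Jacobian. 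Your side remark on the sign is also apt: the paper's (\ref{xx}) carries a minus sign that silently disappears in the theorem's stated formula, precisely the harmless discrepancy you absorb into $\mathbf{G}$ since it cancels in $\boldsymbol{\Xi}$.
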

\begin{proof}
According to Theorem \ref{teo1} and Corollary \ref{cor1}, the Kuhn-Tucker conditions
(\ref{kt1})--(\ref{kt4}) at
$\left[\mathbf{x}^{*}(\widehat{\mathbb{B}}),\lambda^{*}(\widehat{\mathbb{B}})\right]'$
and the conditions (\ref{ktp1})--(\ref{ktp4})
at
$\left[\mathbf{x}^{*}(\mathbb{B}),\lambda^{*}(\mathbb{B})\right]'$ are fulfilled for
mathematical programs (\ref{opp}) and (\ref{eq1}), respectively. From conditions
(\ref{ktp1})--(\ref{ktp4}) of Corollary \ref{cor1}, the following system equation
\begin{eqnarray}\label{ktpp1}
  \nabla_{\mathbf{x}}L(\mathbf{x}, \lambda; \mathbb{B}) =
   \nabla_{\mathbf{x}}f\left(\widehat{\mathbf{Y}}(\mathbf{x})\right) +
   2\lambda(\mathbb{B}) \mathbf{x} &=& \mathbf{0}\\
  \label{ktpp2}
  \nabla_{\lambda}L(\mathbf{x}, \lambda; \mathbb{B}) = ||\mathbf{x}||^{2} - c^{2} & = & 0,
\end{eqnarray}
has a solution $\mathbf{x}^{*}(\mathbb{B}), \lambda^{*}(\mathbb{B}) > 0$,  $\mathbb{B}$.

The nonsingular Jacobian matrix of the continuously differentiable functions (\ref{ktpp1}) and
(\ref{ktpp2}) with respect to $\mathbf{x}$ and $\lambda$ at
$\left[\mathbf{x}^{*}(\widehat{\mathbb{B}}),\lambda^{*}(\widehat{\mathbb{B}})\right]'$ is
$$
  \left(
     \begin{array}{cc}
       \displaystyle\frac{\partial^{2} L(\mathbf{x}, \lambda; \widehat{\mathbb{B}})}{\partial \mathbf{x}\partial
       \mathbf{x}'} & \displaystyle\frac{\partial^{2} L(\mathbf{x}, \lambda; \widehat{\mathbb{B}})}{\partial \lambda \partial
       \mathbf{x}} \\
       \displaystyle\frac{\partial^{2} L(\mathbf{x}, \lambda; \widehat{\mathbb{B}})}{\partial \mathbf{x}'\partial
       \lambda} & 0
     \end{array}
     \right ).
$$
According to the implicit functions theorem, there is a neighborhood
$V_{\varepsilon}(\mathbb{B})$ such that for arbitrary $\widehat{\mathbb{B}} \in
V_{\varepsilon}(\mathbb{B})$, the system (\ref{ktpp1}) and (\ref{ktpp2}) has a unique solution
$\mathbf{x}^{*}(\widehat{\mathbb{B}})$, $\lambda^{*}(\widehat{\mathbb{B}})$,
$\widehat{\mathbb{B}}$ and by Theorem \ref{teo2}, the components of
$\mathbf{x}^{*}(\widehat{\mathbb{B}})$, $\lambda^{*}(\widehat{\mathbb{B}})$ are continuously
differentiable function of $\widehat{\mathbb{B}}$, see \citet{bsh:74}. Their derivatives are
given by
\begin{eqnarray}\label{xx}
  \left (
     \begin{array}{c}
       \displaystyle\frac{\partial \mathbf{x}^{*}(\widehat{\mathbb{B}})}{\partial
       \Vec\widehat{\mathbb{B}}} \\[2ex]
       \displaystyle\frac{\partial \lambda^{*}(\widehat{\mathbb{B}})}{\partial
       \Vec\widehat{\mathbb{B}}}
     \end{array}
  \right )
  &=& - \left(
     \begin{array}{cc}
       \displaystyle\frac{\partial^{2} L(\mathbf{x}, \lambda; \widehat{\mathbb{B}})}{\partial \mathbf{x}\partial
       \mathbf{x}'} & \displaystyle\frac{\partial^{2} L(\mathbf{x}, \lambda; \widehat{\mathbb{B}})}{\partial \lambda \partial
       \mathbf{x}} \\
       \displaystyle\frac{\partial^{2} L(\mathbf{x}, \lambda; \widehat{\mathbb{B}})}{\partial \mathbf{x}'\partial
       \lambda} & 0
     \end{array}
     \right )^{-1}
     \left(
        \begin{array}{c}
          \displaystyle\frac{\partial^{2} L(\mathbf{x}, \lambda; \widehat{\mathbb{B}})}{\partial \mathbf{x}\partial
          \Vec'\widehat{\mathbb{B}}} \\
          0
        \end{array}
     \right).
\end{eqnarray}
The explicit form of $(\partial \mathbf{x}^{*}(\widehat{\mathbb{B}})/ \partial
\Vec\widehat{\mathbb{B}})$ follows from (\ref{xx}) and by the formula
$$
  \left(
    \begin{array}{cc}
      \mathbf{P} & \mathbf{Q} \\
      \mathbf{Q}' & \mathbf{0}
    \end{array}
  \right )^{-1}
  =
  \left(
    \begin{array}{cc}
      [\mathbf{I} - \mathbf{P}^{-1}\mathbf{Q}(\mathbf{Q}'\mathbf{P}^{-1}\mathbf{Q})^{-1}\mathbf{Q}']\mathbf{P}^{-1}
      & \mathbf{P}^{-1}\mathbf{Q}(\mathbf{Q}'\mathbf{P}^{-1}\mathbf{Q})^{-1} \\
      (\mathbf{Q}'\mathbf{P}^{-1}\mathbf{Q})^{-1}\mathbf{Q}'\mathbf{P}^{-1} & -(\mathbf{Q}'\mathbf{P}^{-1}\mathbf{Q})^{-1}
    \end{array}
  \right ),
$$
where $\mathbf{P}$ is symmetric and  nonsingular.

Then from assumption 2, \citet[(iii), p. 388]{r:73} and \citet[Theorem 14.6-2, p.
493]{betal:91} (see also \citet[p. 353]{cr:46}) we have
\begin{equation}\label{AN}
    \sqrt{N_{\nu}}\left[\mathbf{x}^{*}(\widehat{\mathbb{B}}) -
     \mathbf{x}^{*}(\mathbb{B})\right] \build{\rightarrow}{d}{} \mathcal{N}_{n}\left(\mathbf{0}_{n},
     \left(\frac{\partial \mathbf{x}^{*}(\mathbb{B})}{\partial
    \Vec\widehat{\mathbb{B}}}\right)\boldsymbol{\Theta} \left(\frac{\partial \mathbf{x}^{*}
    (\mathbb{B})}{\partial \Vec\widehat{\mathbb{B}}}\right)'\right).
\end{equation}
Finally note that all elements of $(\partial \mathbf{x}^{*}/ \partial \widehat{\mathbb{B}})$
are continuous on $V_{\varepsilon}(\mathbb{B})$, so that the asymptotical distribution
(\ref{AN}) can be substituted by
$$
  \sqrt{N_{\nu}}\left[\mathbf{x}^{*}(\widehat{\mathbb{B}}) -
     \mathbf{x}^{*}(\mathbb{B})\right] \build{\rightarrow}{d}{} \mathcal{N}_{n}\left(\mathbf{0}_{n},
     \left(\frac{\partial \mathbf{x}^{*}(\widehat{\mathbb{B}})}{\partial\Vec
    \widehat{\mathbb{B}}}\right)\widehat{\boldsymbol{\Theta}} \left(\frac{\partial \mathbf{x}^{*}
    (\widehat{\mathbb{B}})}{\partial \Vec\widehat{\mathbb{B}}}\right)'\right),
$$
see \citet[(iv), pp.388--389]{r:73}.
\end{proof}

As a particular case, assume that the functional in (\ref{eq1}) is defined as
$$
  f\left(\widehat{\mathbf{Y}}(\mathbf{x})\right) = \sum_{k = 1}^{r} w_{k}
  \widehat{Y}_{k}(\mathbf{x}), \quad \sum_{k = 1}^{r} w_{k} = 1,
$$
with $w_{k}$ known constants. Then,

\begin{corollary} Suppose:
\begin{enumerate}
   \item For any $\widehat{\mathbb{B}} \in V_{\varepsilon}(\mathbb{B})$,
   the second-order sufficient condition is fulfilled for the convex program (\ref{eq1})
   such that the second order derivatives
   $$
     \frac{\partial^{2} L(\mathbf{x}, \lambda; \widehat{\mathbb{B}})}{\partial \mathbf{x}\partial
     \mathbf{x}'}, \  \frac{\partial^{2} L(\mathbf{x}, \lambda; \widehat{\mathbb{B}})}{\partial \mathbf{x}\partial
     \Vec'\widehat{\mathbb{B}}}, \  \frac{\partial^{2} L(\mathbf{x}, \lambda; \widehat{\mathbb{B}})}{\partial \mathbf{x}\partial
     \lambda}, \  \frac{\partial^{2} L(\mathbf{x}, \lambda; \widehat{\mathbb{B}})}{\partial \lambda \partial
     \mathbf{x}'}, \  \frac{\partial^{2} L(\mathbf{x}, \lambda; \widehat{\mathbb{B}})}{\partial \lambda
     \partial \Vec\widehat{\mathbb{B}}}
   $$
   exist and are continuous in $\left[\mathbf{x}^{*}(\widehat{\mathbb{B}}),
   \lambda^{*}(\widehat{\mathbb{B}})\right]' \in V_{\varepsilon}(\left[\mathbf{x}^{*}(\mathbb{B}),
   \lambda^{*}(\mathbb{B})\right]')$ and
   $$
     \frac{\partial^{2} L(\mathbf{x}, \lambda; \widehat{\mathbb{B}})}{\partial \mathbf{x}\partial
     \mathbf{x}'}
   $$
   is positive definite.
  \item $\widehat{\mathbb{B}}_{\nu}$, the estimator of the true parameter vector
  $\mathbb{B}_{\nu}$,  is based on a sample of size $N_{\nu}$  such that
  $$
    \sqrt{N_{\nu}}(\widehat{\mathbb{B}}_{\nu} - \mathbb{B}_{\nu}) \sim \mathcal{N}_{p \times r}
    (\mathbb{B},  \mathbf{\Theta}), \quad
    \frac{1}{N_{\nu}}\boldsymbol{\Theta} =  (\mathbf{X}'\mathbf{X})^{-1} \otimes \mathbf{\Sigma}.
  $$
  \item (\ref{a21}) holds for $\widehat{\mathbb{B}} = \mathbb{B}$. Then
  asymptotically
  $$
    \sqrt{N_{\nu}}\left[\mathbf{x}^{*}(\widehat{\mathbb{B}}) -
     \mathbf{x}^{*}(\mathbb{B})\right] \build{\rightarrow}{d}{} \mathcal{N}_{n}(\mathbf{0}_{n}, \boldsymbol{\Xi})
  $$
  where the $n \times n$ variance-covariance matrix
  $$
    \boldsymbol{\Xi} = \left(\frac{\partial \mathbf{x}^{*}(\widehat{\mathbb{B}})}{\partial
    \Vec\widehat{\mathbb{B}}}\right)\widehat{\boldsymbol{\Theta}} \left(\frac{\partial \mathbf{x}^{*}
    (\widehat{\mathbb{B}})}{\partial \Vec\widehat{\mathbb{B}}}\right)', \quad
    \frac{1}{N_{\nu}}\widehat{\boldsymbol{\Theta}} =  (\mathbf{X}'\mathbf{X})^{-1} \otimes \widehat{\mathbf{\Sigma}}
  $$
  such that all elements of $\left(\partial \mathbf{x}^{*}(\widehat{\mathbb{B}})/\partial
  \Vec\widehat{\mathbb{B}}\right)$ are continuous on any $\widehat{\mathbb{B}}
  \in V_{\varepsilon}(\mathbb{B})$; furthermore
  $$
    \left(\frac{\partial \mathbf{x}^{*}(\widehat{\mathbb{B}})}{\partial
    \Vec\widehat{\mathbb{B}}}\right) = \mathbf{S}^{-1}
    \left(\frac{\mathbf{x}^{*}(\widehat{\mathbb{B}})\mathbf{x}^{*}(\widehat{\mathbb{B}})'
    \mathbf{S}^{-1}}{\mathbf{x}^{*}(\widehat{\mathbb{B}})'
    \mathbf{S}^{-1}\mathbf{x}^{*}(\widehat{\mathbb{B}})}
    - \mathbf{I}_{n}\right)\mathbf{M}\left(\mathbf{x}^{*}(\widehat{\mathbb{B}})\right),
  $$
  where
  $$
    \mathbf{S} = \frac{\partial^{2} L(\mathbf{x}, \lambda; \widehat{\mathbb{B}})}{\partial \mathbf{x}\partial
     \mathbf{x}'}= 2\sum_{k = 1}^{r}w_{k} \widehat{\mathbf{B}}_{k}-2\lambda^{*}
    (\widehat{\mathbb{B}})\mathbf{I}_{n}.
  $$
  and
\begin{eqnarray*}
  \mathbf{M}(\mathbf{x}) &=& \nabla_{\mathbf{x}}\mathbf{z}'(\mathbf{x}) = \frac{\partial \mathbf{z}'(\mathbf{x})}{\partial \mathbf{x}} \\
     &=& (\mathbf{0}\vdots \mathbf{I}_{n} \vdots 2\diag(\mathbf{x}) \vdots \mathbf{C}_{1}\vdots\cdots
     \vdots\mathbf{C}_{n-1}) \in \Re^{n \times p},
\end{eqnarray*}
with
$$
  \mathbf{C}_{i} =
  \left(
     \begin{array}{c}
       \mathbf{0}'_{1} \\
       \vdots \\
       \mathbf{0}'_{i-1} \\
       \mathbf{x}'\mathbf{A}_{i} \\
       x_{i}\mathbf{I}_{n-i}
     \end{array}
  \right), i = 1, \dots, n-1, \quad \mathbf{0}_{j} \in \Re^{n-i}, j = 1, \dots, i-1;
$$
observing that when $i=1$ (i.e. j = 0), this row does not appear in $\mathbf{C}_{1}$; and
$$
  \mathbf{A}_{i} =
  \left (
     \begin{array}{c}
       \mathbf{0}'_{1} \\
       \vdots \\
       \mathbf{0}'_{i} \\
       \mathbf{I}_{n-i}
     \end{array}
  \right), \quad \mathbf{0}'_{k} \in \Re^{n-i}, k = 1, \dots,i.
$$
\end{enumerate}
\end{corollary}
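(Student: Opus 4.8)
The plan is to read this corollary as the specialisation of Theorem \ref{teo3} to the weighted-sum functional $f(\widehat{\mathbf{Y}}(\mathbf{x}))=\sum_{k=1}^{r}w_{k}\widehat{Y}_{k}(\mathbf{x})$. Since assumptions 1--3 here are verbatim those of Theorem \ref{teo3}, the asymptotic normality $\sqrt{N_{\nu}}[\mathbf{x}^{*}(\widehat{\mathbb{B}})-\mathbf{x}^{*}(\mathbb{B})]\build{\rightarrow}{d}{}\mathcal{N}_{n}(\mathbf{0}_{n},\boldsymbol{\Xi})$ together with the sandwich form of $\boldsymbol{\Xi}$ follow immediately from that theorem. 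The only genuinely new content is the explicit evaluation of the Jacobian $\partial\mathbf{x}^{*}(\widehat{\mathbb{B}})/\partial\Vec\widehat{\mathbb{B}}$ for this functional; thus the whole task reduces to computing the three Hessian blocks $\mathbf{P}$, $\mathbf{Q}$, $\mathbf{G}$ appearing in Theorem \ref{teo3} and substituting them into its general formula.

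First I would rewrite the objective compactly. Using $\widehat{Y}_{k}(\mathbf{x})=\mathbf{z}'(\mathbf{x})\widehat{\boldsymbol{\beta}}_{k}$ and $\mathbf{w}=(w_{1},\dots,w_{r})'$, the functional becomes $f=\mathbf{z}'(\mathbf{x})\widehat{\mathbb{B}}\mathbf{w}$, so that $\nabla_{\mathbf{x}}L=\mathbf{M}(\mathbf{x})\widehat{\mathbb{B}}\mathbf{w}+2\lambda\mathbf{x}$ with $\mathbf{M}(\mathbf{x})=\partial\mathbf{z}'(\mathbf{x})/\partial\mathbf{x}$. Differentiating once more yields the three blocks: $\mathbf{P}=\mathbf{S}=\partial^{2}L/\partial\mathbf{x}\partial\mathbf{x}'=2\sum_{k}w_{k}\widehat{\mathbf{B}}_{k}-2\lambda^{*}(\widehat{\mathbb{B}})\mathbf{I}_{n}$, each $\widehat{Y}_{k}$ contributing the Hessian $2\widehat{\mathbf{B}}_{k}$ of its quadratic part and the constraint contributing the $\lambda$-term; $\mathbf{Q}=\partial^{2}L/\partial\lambda\partial\mathbf{x}=2\mathbf{x}^{*}$; and, using $\Vec(\widehat{\mathbb{B}}\mathbf{w})=(\mathbf{w}'\otimes\mathbf{I}_{p})\Vec\widehat{\mathbb{B}}$, the mixed block $\mathbf{G}=\partial^{2}L/\partial\mathbf{x}\partial\Vec'\widehat{\mathbb{B}}=\mathbf{M}(\mathbf{x}^{*})(\mathbf{w}'\otimes\mathbf{I}_{p})$.

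The next step is to justify the displayed block form of $\mathbf{M}(\mathbf{x})$. Its $(i,j)$ entry is $\partial z_{j}(\mathbf{x})/\partial x_{i}$, so I would differentiate the four groups of coordinates of $\mathbf{z}(\mathbf{x})$ in turn: the constant gives the $\mathbf{0}$ column, the linear coordinates give $\mathbf{I}_{n}$, the pure squares $x_{l}^{2}$ give $2\diag(\mathbf{x})$, and the cross products $x_{i}x_{j}$ with $i<j$ give the blocks $\mathbf{C}_{i}$. The bookkeeping for the cross terms is the delicate part: within the block carrying $x_{i}x_{i+1},\dots,x_{i}x_{n}$ one checks that differentiation in $x_{i}$ produces the row $(x_{i+1},\dots,x_{n})=\mathbf{x}'\mathbf{A}_{i}$, differentiation in $x_{i+1},\dots,x_{n}$ produces $x_{i}\mathbf{I}_{n-i}$, and all rows above vanish, which is exactly the stated $\mathbf{C}_{i}$.

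Finally I would substitute into the formula of Theorem \ref{teo3}. Because $\mathbf{P}^{-1}\mathbf{Q}=2\mathbf{S}^{-1}\mathbf{x}^{*}$ and $\mathbf{Q}'\mathbf{P}^{-1}\mathbf{Q}=4\,\mathbf{x}^{*\prime}\mathbf{S}^{-1}\mathbf{x}^{*}$ is a scalar, the factors $2$ and $4$ cancel and the rank-one correction collapses to $\mathbf{P}^{-1}\mathbf{Q}(\mathbf{Q}'\mathbf{P}^{-1}\mathbf{Q})^{-1}\mathbf{Q}'\mathbf{P}^{-1}=\mathbf{S}^{-1}\mathbf{x}^{*}\mathbf{x}^{*\prime}\mathbf{S}^{-1}/(\mathbf{x}^{*\prime}\mathbf{S}^{-1}\mathbf{x}^{*})$. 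Carrying through the minus sign of equation (\ref{xx}) and multiplying by $\mathbf{G}$ then gives $\partial\mathbf{x}^{*}/\partial\Vec\widehat{\mathbb{B}}=\mathbf{S}^{-1}\big(\mathbf{x}^{*}\mathbf{x}^{*\prime}\mathbf{S}^{-1}/(\mathbf{x}^{*\prime}\mathbf{S}^{-1}\mathbf{x}^{*})-\mathbf{I}_{n}\big)\mathbf{M}(\mathbf{x}^{*})$, the asserted expression. I expect the main obstacle to be precisely the index-chasing that establishes the $\mathbf{C}_{i}$ blocks of $\mathbf{M}(\mathbf{x})$; the remaining manipulations are the routine scalar cancellations above and a direct appeal to Theorem \ref{teo3}. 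One should also keep careful track of the Kronecker factor $(\mathbf{w}'\otimes\mathbf{I}_{p})$ in $\mathbf{G}$ and of the sign convention for the $\lambda$-term of $\mathbf{S}$.
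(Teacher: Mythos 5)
Your proposal is correct and follows exactly the paper's route: the paper's own proof of this corollary is a single line that invokes Theorem \ref{teo3} and records the specialised gradient $\nabla_{\mathbf{x}}L(\mathbf{x},\lambda;\mathbb{B})=\mathbf{M}(\mathbf{x})\sum_{k=1}^{r}w_{k}\boldsymbol{\beta}_{k}+2\lambda(\mathbb{B})\mathbf{x}$, and your computation of the blocks $\mathbf{P}$, $\mathbf{Q}$, $\mathbf{G}$, of the block structure of $\mathbf{M}(\mathbf{x})$ via the $\mathbf{C}_{i}$, and the rank-one scalar cancellation is precisely the detail the paper leaves implicit. The two wrinkles you flag are inconsistencies in the paper itself rather than gaps in your argument: with the Lagrangian (\ref{la}) the $\lambda$-term of $\mathbf{S}$ should enter with a plus sign, and the Kronecker factor $(\mathbf{w}'\otimes\mathbf{I}_{p})$ in $\mathbf{G}=\mathbf{M}(\mathbf{x}^{*})(\mathbf{w}'\otimes\mathbf{I}_{p})$, which your derivation correctly produces (and which is needed for the sandwich with the $pr\times pr$ matrix $\widehat{\boldsymbol{\Theta}}$ to be conformable), is silently dropped in the paper's displayed Jacobian.
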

\begin{proof} The required result  follows from Theorem \ref{teo3} and observing that in this
particular case
\begin{eqnarray*}
  \nabla_{\mathbf{x}}L(\mathbf{x}, \lambda; \mathbb{B}) =
  \left\{
    \begin{array}{l}
      \mathbf{M}(\mathbf{x}) \displaystyle\sum_{k = 1}^{r}w_{k} \boldsymbol{\beta}_{k} +
      2\lambda(\mathbb{B}) \mathbf{x}\\ \quad\mbox{or}\\
      \displaystyle\sum_{k = 1}^{r}w_{k}\left[\boldsymbol{\beta}_{1k} +
      2\mathbf{B}_{k}\mathbf{x}\right] + 2\lambda(\mathbb{B})\mathbf{x}
    \end{array}
  \right \}
   &=& \mathbf{0}\\
  \label{ktpp2}
  \nabla_{\lambda}L(\mathbf{x}, \lambda; \boldsymbol{\beta}) = ||\mathbf{x}||^{2} - c^{2} & = & 0
\end{eqnarray*}
\end{proof}

\section*{\normalsize Conclusions}

As a consequence of Theorem \ref{teo2} now is feasible to establish confidence  regions and
intervals and hypothesis tests on the critical point, see \citet[Section 14.6.4, pp.
498--500]{betal:91}; it is also possible to identify operating conditions as regions
or intervals instead of isolated points.

The results of this paper can be taken as a good first approximation to the exact problem.
However, unfortunately in many applications the number of observations is relatively small and
perhaps the results obtained in this work should be applied with caution.

\section*{\normalsize Acknowledgments}

This paper was written during J. A. D\'{\i}az-Garc\'{\i}a's stay as a professor at the
Department of Statistics and O. R of the University of Granada, Espa\~{n}a. F. Caro was
supported by the project No. 158 of University of Medellin.

\end{document}